\newtheorem{thm}{Theorem}[section]
\newtheorem{cor}[thm]{Corollary}
\newtheorem{lem}[thm]{Lemma}
\newtheorem{prob}{Problem}
\theoremstyle{definition}
\newtheorem{defin}[thm]{Definition}
\numberwithin{equation}{section}
\newcommand{\0}{\emptyset}
\newcommand{\mc}{\mathcal}
\newcommand{\RR}{\mathbb{R}}
\newcommand{\IFF}{\Leftrightarrow}
\newcommand{\IMP}{\Rightarrow}
\newcommand{\foralmostall}{\forall^\infty}
\newcommand{\existsinfty}{\exists^\infty}
\newcommand{\ii}{\mc{I}}
\newcommand{\jj}{\mc{J}}
\newcommand{\mm}{\mc{M}}
\newcommand{\nn}{\mc{N}}
\newcommand{\ff}{\mc{F}}
\newcommand{\bb}{\mc{B}}
\newcommand{\ee}{\mc{E}}
\newcommand{\kk}{\mc{K}}
\newcommand{\cm}{\mathfrak{c}}
\newcommand{\se}{\subseteq}
\newcommand{\es}{\supseteq}
\newcommand{\baire}{\omega^{\omega}}
\begin{document}


\baselineskip=17pt


\title{Universal sets for ideals}

\author{Aleksander Cie{\'s}lak\\
E-mail: aleksander.cieslak@pwr.edu.pl\\
\&
\\
Marcin Michalski\\
E-mail: marcin.k.michalski@pwr.edu.pl\\
\\
Department of Computer Science\\ 
Faculty of Fundamental Problems of Technology\\
Wroc{\l}aw University of Science and Technology\\
Wybrze{\.z}e Wyspia{\'n}skiego 27\\
50-370 Wroc{\l}aw, Poland}
\date{}

\maketitle


\renewcommand{\thefootnote}{}

\footnote{2010 \emph{Mathematics Subject Classification}: Primary 54H05; Secondary 03E57.}

\footnote{\emph{Key words and phrases}: universal set, ideal, Borel rank, Borel complexity, absolute, Fubini product, Polish space.}

\renewcommand{\thefootnote}{\arabic{footnote}}
\setcounter{footnote}{0}


\begin{abstract}
In this paper we consider a notion of universal sets for ideals. We show that there exist universal sets of minimal Borel complexity for classic ideals like null subsets of $2^\omega$ and meager subsets of any Polish space, and demonstrate that the existence of such sets is helpful in establishing some facts about the real line in generic extensions. We also construct universal sets for $\ee$ - the $\sigma$-ideal generated by closed null subsets of $2^\omega$, and for some ideals connected with forcing notions: $\kk_\sigma$ subsets of $\baire$ and the Laver ideal. We also consider Fubini products of ideals and show that there are $\Sigma^0_3$ universal sets for $\nn\otimes\mm$ and $\mm\otimes\nn$.
\end{abstract}

\section{Introduction and preliminaries}
We use the standard set-theoretic notation based on \cite{Jech}. Let $X$ be a Polish space. Throughout the paper $\bb$ will denote the family of Borel subsets of $X$, $\mm$ the $\sigma$-ideal of sets of the first category, $\nn$ the $\sigma$-ideal of null subsets of $X$, $\ee$ the $\sigma$-ideal of sets generated by closed sets of measure zero, and $\kk_\sigma$ a $\sigma$-ideal generated by compact subsets of $X$. We may sometimes write $\bb(X)$, $\mm(X)$, etc. if we work in many different spaces and the context is not clear. Also, we consider the above mentioned ideals in spaces where it is reasonable to do so, i.e. in the cases of $\nn$ and $\ee$ we work in $\RR$ or $2^\omega$ and in the case of $\mc{K}_\sigma$ we work in $\baire$ as it is a natural example of a Polish space which is not $\sigma$-compact.
\\
Now let us state a definition of universal sets (following \cite{Sri}).
\begin{defin}
Let $\mc{F}$ be a family of subsets of $X$. We say that $U\se \omega^\omega\times X$ is a universal set for the family $\ff$ if $(\forall F) ((F\in\ff) \IFF (\exists x\in\baire)( U_x=F)).$
\end{defin}
In a similar fashion we define universal sets for ($\sigma$-)ideals. Let $\ii$ be a nontrivial ($X\notin\ii$ and $\ii$ contains all singletons) ideal or $\sigma$-ideal. Let us recall that a base of $\ii$ is a family $\mc{F}\se\ii$ which is cofinal in $\ii$, i.e. for each $A\in\ii$ there is $A'\in\ff$ such that $A\se A'$.
\begin{defin}
We call a set $U\se \baire\times X$ universal for $\mc{I}$ if it is universal for a base of $\ii$, i.e. a family of vertical sections $\{U_x: x\in \baire\}$ is a base of $\mc{I}$ and $U_x\in\mc{I}$ for all $x\in\omega^\omega$.
\end{defin}
A reason for a switch to a base of ideal, instead the whole ideal, is that usually there are too many sets in the ideal. That is also why we will consider mainly ideals that posses a Borel base, that is, a base consisting of Borel sets. The existence of such a base guarantees that an ideal has a cofinality not greater than $\cm$ and so the existence of a universal set for the ideal is possible (although it may be a wild subset of $\baire\times X$). A good example of an ideal for which there is no universal set is the Marczewski ideal $s_0$ (let us recall that $A\in s_0$, if for every perfect set $P$ there is a perfect set $Q\se P$ for which $Q\cap A=\0$), since its cofinality is greater than $\cm$ (see \cite{JMS}, also \cite{BKW}).

Universal sets were introduced in the beginning of XXth century with the dawn of descriptive set theory and have been studied by Lebesgue, Luzin and Souslin among others. Classic results regarding universal sets include: for each $\alpha<\omega_1$ there is a $\Sigma^0_{\alpha}$ universal set for the family of $\Sigma^0_{\alpha}$ sets, there exists a $\Sigma^1_1$ universal set for $\bb$, and also a $\Sigma^1_1$ universal set for the family of analytic sets. Recently universal sets were studied by A. W. Miller in \cite{Mi}, where the author considers so called uniquely universal sets. A set $U$ is uniquely universal if it is open subset of $X\times Y$ and for each open $W\se Y$ there is exactly one $x\in X$ with $U_x=W$. Two problems posed in that paper were solved by A. Krzeszowiec in \cite{Krz}. The notion of universal sets with respect to ideals appeared e.g. in \cite{PaRe}, where the authors constructed $G_\delta$ and $F_\sigma$ universal sets for the $\sigma$-ideals of null and meager subsets of the Cantor space respectively.

We are interested in finding universal sets of a possibly low complexity, by which we mean occurring early in the Borel or projective hierarchy. The existence of universal sets of a low complexity may be also useful in establishing some facts about the real line in generic extensions, but first let us give some context.
\begin{defin}
Let $\mc{I}$ be a definable ideal of subsets of a Polish space and let $M\se N$ both be a transitive models of ZFC. We say that $\mc{I}$ is absolute if for every Borel code $b\in M$ we have $M\models \#b\in \mc{I}$ if and only if $N\models \#b\in \mc{I}$.
\end{defin}
\begin{defin}
Let $\ii$ and $\jj$ be $\sigma$-ideals of subsets of spaces $X$ and $Y$ respectively. We define the Fubini product of these ideals in the following way
$$
A\in \ii\otimes\jj\IFF (\exists B\in\bb(X\times Y))(A\se B \land \{x: B_x\notin\jj\}\in\ii).
$$
\end{defin}
We say that a pair $(\ii, \jj)$ has the Fubini property if for every Borel set $B\se X\times Y$ we have
$$
\{x: B_x\notin\jj\}\in\ii\IMP \{y: B_y\notin\ii\}\in\jj.
$$
If $(\ii, \ii)$ has the Fubini property then we simply say that $\ii$ has it. Thanks to the notion of universal sets we may give a relatively simple proof of the following theorem (see \cite{Kun}, Theorem 3.20; also \cite{RaZe}, Lemma 3.1).
\begin{thm}
Let $\ii$ be an absolute $\sigma$-ideal satisfying the Fubini property, for which there exists a universal set $U\in\bb(\baire\times\RR)$. Let $M$ be a transitive and countable model of ZFC. Then in the $Borel / \ii$ forcing extension $M[G]$ we have $\RR\cap M\notin \mc{I}$.
\end{thm}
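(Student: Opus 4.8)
The plan is to argue by contradiction and to use the universal set $U$ precisely to bring the Fubini property into play. I shall freely use two standard features of the forcing $\bb/\ii$: that it adds a generic real $r_G$ with $M[G]=M[r_G]$ and with $A\in G\IFF r_G\in A$ for every $\ii$-positive Borel $A$ coded in $M$; and that names for reals admit continuous reading, i.e.\ below every condition there are a stronger $q$ and a ground-model Borel function $\sigma$ with $q\Vdash\dot x=\sigma(r_G)$. I also need $U$ (taken to lie in $M$) to remain universal for $\ii$ in $M[G]$ --- this is where the absoluteness of $\ii$ is used, since ``$U$ is universal'' unfolds into assertions of the form ``$\#b\in\ii$'' --- so that in $M[G]$ every section $U_x$ is $\ii$-small and the sections of $U$ are cofinal in $\ii$.

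Suppose toward a contradiction that $M[G]\models\RR\cap M\in\ii$. By universality of $U$ in $M[G]$ there are a name $\dot x$ and a condition $p\in G$ forcing that $\dot x\in\baire$ and that every ground-model real lies in $U_{\dot x}$; passing to a stronger condition $q\le p$ I may fix a Borel $\sigma\colon\RR\to\baire$ coded in $M$ with $q\Vdash\dot x=\sigma(r_G)$, and $q$ still forces every ground-model real into $U_{\dot x}$. Now put $B=\{(r,y)\in\RR\times\RR:y\in U_{\sigma(r)}\}$ (viewing $\baire$ as the irrationals); this is Borel and coded in $M$, being the preimage of $U$ under $(r,y)\mapsto(\sigma(r),y)$. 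Every vertical section $B_r=U_{\sigma(r)}$ is a section of $U$, hence lies in $\ii$, so $\{r:B_r\notin\ii\}=\0\in\ii$ and the Fubini property yields $E:=\{y:B_y\notin\ii\}\in\ii$; as all the Borel codes involved are in $M$, this holds in $M$ too. Since $\ii$ is nontrivial, $\RR\setminus E\neq\0$ in $M$, so we may pick $y_0\in\RR\cap M$ with $y_0\notin E$, and then $B_{y_0}=\{r:y_0\in U_{\sigma(r)}\}$ is an $\ii$-small Borel set coded in $M$.

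Finally, set $q'=q\setminus B_{y_0}$. Since $B_{y_0}\in\ii$, $q'$ is $\ii$-positive, so it is a condition below $q$; and $q'\cap B_{y_0}=\0$ forces $r_G\notin B_{y_0}$, i.e.\ $q'\Vdash y_0\notin U_{\sigma(r_G)}$, that is $q'\Vdash\check{y_0}\notin U_{\dot x}$. But $q'\le q$, and $q$ forces $\check{y_0}\in U_{\dot x}$ because $y_0\in\RR\cap M$; no condition can force both, a contradiction. Hence $M[G]\models\RR\cap M\notin\ii$, as desired.

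The real work sits in the two preliminary facts rather than in the displayed argument: that $\bb/\ii$ adds a generic real with continuous reading of names, and that $U$ stays universal (equivalently, $\ii$ keeps a Borel base in $M[G]$). For the concrete instances --- $\bb/\nn$, $\bb/\mm$, $\bb/\ee$, $\bb/\kk_\sigma$ and Laver forcing --- both are routine, and the hypothesis that $\ii$ is absolute is exactly what carries the membership relation ``$\cdot\in\ii$'' back and forth between $M$ and $M[G]$; in the abstract formulation these deserve an explicit remark, and I would expect them to be the main obstacle to a fully rigorous account.
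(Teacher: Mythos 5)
Your proposal is correct and follows essentially the same route as the paper: cover $\RR\cap M$ by a section $U_{\sigma(r_G)}$ with $\sigma$ Borel and coded in $M$, pull $U$ back to $B=\overline{U}$, apply the Fubini property to get a ground-model $y_0$ with $B_{y_0}\in\ii$, and contradict the genericity of $r_G$ via absoluteness. Your write-up is in fact somewhat more careful than the paper's (which contains the typo ``$f(x)=r$'' for $f(r)=x$ and leaves the continuous reading of names and the preservation of universality implicit), but there is no difference in the underlying argument.
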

\begin{proof}
Let us assume that in a generic extension $M[r]$, where $r$ is a generic real, we have $\RR\cap M \in \ii$, and let $U$ be a universal set for $\ii$. For any $x\in\RR\cap M[r]$ there exists a Borel measurable $f\in M$ such that $f(x)=r$, so
$$
\RR\cap M \se \overline{U}_{r},
$$
where $\overline{U}=\{(x,y)\in \RR^{2}: (f(x),y)\in U\}$. By the Fubini property for $\ii$ we have that in the model $M$ there exists $y\in \RR\cap M $ such that $\overline{U}^{y}\in \ii$. Then, by the absoluteness of $\ii$, we have that $\overline{U}^{y}\in \ii$ in $M[r]$ and so $r\in \overline{U}^{y}$, which contradicts the genericity of $r$.
\end{proof}
Let us note that in \cite{ReZa} the authors found classes of ideals that have the Fubini Property beyond $\mm$ and $\nn$.
\\
Let us recall that an ideal $\ii\se P(Y)$ is called Borel-on-$\Sigma_1^1$, if for every Borel set $B\se X\times Y$ the set $\{x: B_x\in\ii\}$ is $\Sigma^1_1$. Similarly we define being $\mc{A}$-on-$\bb$ for any classes $\mc{A}, \bb$.

\begin{lem}\label{Borel-on-Sigma11}
Let $\ii$ be a Borel-on-$\Sigma_1^1$ ideal possessing a Borel base of bounded Borel complexity. Then there exists a universal set for $\ii$ of the same complexity as the base of $\ii$.
\end{lem}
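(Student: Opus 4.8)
The plan is to build the universal set by ``thinning out'' a universal set for the \emph{pointclass} of the base rather than for the ideal itself, the point being to do this without raising the complexity. Fix a Borel base $\ff$ of $\ii$ and let $\Gamma$ be the level of the Borel hierarchy (one of $\Sigma^0_\alpha$ or $\Pi^0_\alpha$, $\alpha<\omega_1$) witnessing that $\ff$ has bounded Borel complexity, so that $\ff\se\Gamma(Y)$, where $Y$ is the underlying Polish space. By the classical theorem on universal sets there is a $\Gamma$-universal set $V\se\baire\times Y$ for $\Gamma(Y)$; hence $\{V_x:x\in\baire\}$ is precisely the family of all $\Gamma$ subsets of $Y$, and in particular $\ff\se\{V_x:x\in\baire\}$. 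Since $V$ is Borel and $\ii$ is Borel-on-$\Sigma_1^1$, the set
$$
A=\{x\in\baire:V_x\in\ii\}
$$
is $\Sigma^1_1$, and it is nonempty because $\ii$ is nontrivial and contains singletons, so $\ff\neq\0$, and every member of $\ff$ equals $V_x$ for some $x\in A$.

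Now the main step: $A$ is a nonempty analytic subset of $\baire$, hence there is a continuous surjection $g\colon\baire\to A$. Define
$$
U=\{(z,y)\in\baire\times Y:(g(z),y)\in V\},
$$
so that $U_z=V_{g(z)}$ for every $z\in\baire$. Because $g$ is continuous and $\Gamma$ is closed under continuous preimages, $U\in\Gamma(\baire\times Y)$, i.e.\ $U$ has the same complexity as the base. For each $z$ we have $g(z)\in A$, so $U_z=V_{g(z)}\in\ii$; thus every vertical section of $U$ lies in $\ii$. Moreover $\{U_z:z\in\baire\}=\{V_x:x\in A\}\es\ff$, so this family is cofinal in $\ii$ (since $\ff$ is) and consists of members of $\ii$ --- that is, it is a base of $\ii$. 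Hence $U$ is a universal set for $\ii$ of the required complexity.

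The only delicate point --- and the only place where the hypotheses genuinely enter --- is that $A$ is merely analytic and need not be Borel, so one cannot simply put $U=V\cap(A\times Y)$; replacing the first coordinate by a continuous surjection onto $A$ is exactly what keeps us inside $\Gamma$. Along the way one should record the standard facts that each level $\Sigma^0_\alpha$, $\Pi^0_\alpha$ admits a $\baire$-universal set over an arbitrary Polish space, that every nonempty analytic subset of a Polish space is a continuous image of $\baire$, and that the Borel levels are closed under continuous preimages.
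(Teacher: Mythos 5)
Your proposal is correct and follows essentially the same route as the paper: take a universal set for the pointclass of the base, observe that the set $A$ of parameters whose sections lie in $\ii$ is analytic by the Borel-on-$\Sigma^1_1$ hypothesis, and reparametrize by a continuous surjection from $\baire$ onto $A$, which preserves the Borel level. Your write-up is merely more explicit about the details (nonemptiness of $A$, closure of the level under continuous preimages, cofinality of the resulting family) that the paper leaves implicit.
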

\begin{proof}
Without loss of generality let us assume that the Borel base of $\ii$ has a complexity $\Sigma^0_\alpha$. Let $U$ be a universal $\Sigma^0_\alpha$ set and $A=\{x\in\baire: U_x\in\ii\}$. Since $A$ is analytic, there is a continuous and surjective function $f: \baire \rightarrow A$. Then the set
$$
V=(f, id)^{-1}[U], \textnormal{ where for all $(x,y)$ $(f, id)(x,y)=(f(x),y)$}
$$
is a universal set for $\ii$ of complexity $\Sigma^0_\alpha$.
\end{proof}
Let us observe that the existence of a universal set of bounded Borel complexity does not imply that the ideal is Borel-on-$\Sigma^1_1$: e.g. $\ii=\{\0\}$ is Borel-on-$\Pi^1_1$ and $\Pi^1_1$ cannot be substituted with $\Sigma^1_1$ in this case.
\\
The following lemma, which can be found in \cite{Bar} (Theorem 2.2.4), will be useful in the next section.
\begin{lem}\label{Meagery B-J}
Let $\mc{P}$ be a family of partitions of $\omega$ into intervals of the form $I_n=[a_n, b_n)$. A family $\{F_{x, P}\se 2^\omega: x\in 2^\omega, P\in\mc{P}\}$, where $F_{x, P}=\{y\in 2^\omega: (\foralmostall n\in\omega) (x|_{I_n}\neq y|_{I_n})\}$, is a Borel base of $\mm(2^\omega)$.
\end{lem}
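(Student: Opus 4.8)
The plan is to verify directly the two defining properties of a base of $\mm(2^\omega)$: that every $F_{x,P}$ is a Borel set lying in $\mm(2^\omega)$, and that the family $\{F_{x,P}:x\in 2^\omega,\ P\in\mc{P}\}$ is cofinal in $\mm(2^\omega)$. The first is routine. Writing $I_n=[a_n,b_n)$ we have
$$
F_{x,P}=\bigcup_{m}\bigcap_{n\geq m}\{y\in 2^\omega: y|_{I_n}\neq x|_{I_n}\},
$$
which is an $F_\sigma$ set, since each $\{y: y|_{I_n}\neq x|_{I_n}\}$ is clopen. Moreover, for fixed $m$ the closed set $\bigcap_{n\geq m}\{y: y|_{I_n}\neq x|_{I_n}\}$ is nowhere dense: given any basic clopen $[s]$, choose $n\geq m$ with $a_n\geq |s|$ and extend $s$ to a string agreeing with $x$ on $I_n$; the resulting clopen subset of $[s]$ misses the set. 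Hence $F_{x,P}$ is a countable union of closed nowhere dense sets, so $F_{x,P}\in\mm(2^\omega)$.

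For cofinality I would start from an arbitrary $A\in\mm(2^\omega)$, fix an increasing sequence $(F_n)$ of closed nowhere dense sets with $A\se\bigcup_n F_n$, and build an interval partition $Q=\langle I_n=[k_n,k_{n+1})\rangle$ together with a point $y\in 2^\omega$ recursively so that $\{z: z|_{I_n}=y|_{I_n}\}\cap F_n=\emptyset$ for every $n$. The recursion step rests on the observation that, for a closed nowhere dense $F\se 2^\omega$ and any $m\in\omega$, the image of $F$ under the projection $2^\omega\to 2^{[m,\infty)}$ forgetting the first $m$ coordinates equals $\bigcup_{s\in 2^m}\{z': s^\frown z'\in F\}$, a \emph{finite} union of closed nowhere dense sets, hence itself closed nowhere dense; therefore its open dense complement contains a basic clopen set, i.e.\ there are $M>m$ and $v\in 2^{[m,M)}$ with $\{z: z|_{[m,M)}=v\}\cap F=\emptyset$. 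Applying this with $F=F_n$ and $m=k_n$ yields $k_{n+1}>k_n$ and a value $y|_{I_n}:=v$ with the required property. Once $Q$ and $y$ are built, any $z\in A$ lies in some $F_j$, hence in $F_n$ for all $n\geq j$, so $z|_{I_n}\neq y|_{I_n}$ for all $n\geq j$; that is, $A\se F_{y,Q}$. Finally I would use the (necessary) richness assumption on $\mc{P}$ — that every interval partition of $\omega$ admits a coarsening lying in $\mc{P}$, which for instance holds when $\mc{P}$ is the collection of all such partitions — to pick $P\in\mc{P}$ coarser than $Q$, and note that with the same witness $y$ one has $F_{y,Q}\se F_{y,P}$: if each block of $P$ is a union of consecutive blocks of $Q$ and $z$ differs from $y$ on every $I_n$ with $n$ large, then $z$ differs from $y$ on all but finitely many blocks of $P$. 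Thus $A\se F_{y,P}$ with $P\in\mc{P}$, completing the verification.

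The step I expect to be the crux is the recursion observation — that a closed nowhere dense set can be avoided on a single finite window of coordinates in a manner independent of the coordinates preceding that window. This is exactly where nowhere density is genuinely used, via the elementary facts that a finite union of closed nowhere dense sets is closed nowhere dense and that the complement of such a set contains a basic clopen set; everything else is bookkeeping about interval partitions and the ``eventually different on blocks'' relation, including the minor point of transferring the cover from the partition $Q$ one constructs to a partition actually belonging to $\mc{P}$.
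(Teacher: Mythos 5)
Your proof is correct. The paper does not actually prove this lemma --- it only cites Bartoszy\'nski--Judah (Theorem 2.2.4) --- and your argument is essentially the standard proof of that result: the $F_\sigma$/nowhere-dense computation for each $F_{x,P}$, and the recursive construction of an interval partition and a witness $y$ avoiding an increasing sequence of closed nowhere dense sets block by block. You are also right to flag that the statement as printed needs a richness hypothesis on $\mc{P}$ (e.g.\ that $\mc{P}$ contains a coarsening of every interval partition, as when $\mc{P}$ is the family of all such partitions); without it the cofinality claim fails, and your coarsening step $F_{y,Q}\se F_{y,P}$ handles this correctly.
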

If $x\in 2^\omega$ and $P$ is a partition of $\omega$ into intervals as in the lemma above, then
$$
F_{x, P}=\bigcup_{n\in\omega}\bigcap_{k>n}\bigcup_{m\in I_k}\{y\in 2^\omega: x(m)\neq y(m)\},
$$
so the complexity of the Borel base mentioned in the Lemma \ref{Meagery B-J} is $F_\sigma$.
\\
The main result of this paper includes also a construction of a universal set for the Laver ideal. Let us recall its definition.

\begin{defin}
We say that $A\se \omega^{\omega}$ is strongly dominating if for every $\Phi:\omega^{<\omega}\rightarrow \omega$ there exists $f\in A$ such that $\foralmostall_{n}f(n)\geq\Phi(f|_{n})$.
\end{defin}

The Laver ideal is the family of sets which are not strongly dominating (see \cite{Zap}).

\section{Acquiring universal sets}

In this section we will construct universal sets for the ideals mentioned above.

\begin{thm}
There exist:
\begin{description}
\item[(i)] a universal $F_\sigma$ set for countable subsets of $\baire$;
\item[(ii)] a universal $F_\sigma$ set for meager subsets of a Polish space $X$;
\item[(iii)] a universal $G_{\delta}$ set for null subsets of $2^{\omega}$ (equipped with the Haar measure);
\item[(iv)]a universal $F_{\sigma}$ set for $\mc{E}\se P(2^\omega)$;
\item[(v)]a universal $F_\sigma$ set for $\kk_{\sigma}\se P(\omega^{\omega})$;
\item[(vi)]a universal $G_{\delta}$ set for Laver ideal.
\end{description}

\end{thm}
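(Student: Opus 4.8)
The plan is to construct each universal set separately, in each case starting from a known universal set for a pointclass (guaranteed by the classical results recalled in the introduction) and then either restricting it, composing it with a suitable continuous map, or coding an explicitly described Borel base directly into a subset of $\baire\times X$.

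For (i), I would take a $\Pi^0_1$ (closed) universal set $C\se\baire\times\baire$ for closed subsets of $\baire$ and observe that $\{(x,y): y\in C_x \land C_x \text{ is discrete}\}$ does not obviously have the right complexity, so instead I would code a countable set directly: given $x\in\baire$, decode from $x$ a sequence $(x_n)_{n\in\omega}$ of elements of $\baire$ (using a fixed homeomorphism $\baire\cong(\baire)^\omega$) and put $U=\{(x,y): \exists n\, y=x_n\}$, which is a countable union of graphs of continuous functions, hence $F_\sigma$; clearly every countable set equals some $U_x$ and every $U_x$ is countable. For (ii), the idea is to invoke Lemma~\ref{Meagery B-J}: fix a recursive enumeration of all partitions of $\omega$ into finite intervals (these form a closed subset of a Polish space, so they are themselves Polish), decode from $x\in\baire$ a pair $(z,P)$ with $z\in 2^\omega$ and $P$ a partition, and set $U_x=F_{z,P}$; the displayed formula after Lemma~\ref{Meagery B-J} shows each such section is $F_\sigma$ and that the whole set $U$ is $F_\sigma$ in $\baire\times 2^\omega$, and by the lemma $\{U_x\}$ is a base of $\mm(2^\omega)$. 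For a general Polish space $X$ one transfers this along a Borel isomorphism with $2^\omega$ (or uses the standard characterization of meager sets via a countable base), taking care that a Borel isomorphism need not preserve category but that one can work with a continuous open surjection or with the fact that meager ideals of Polish spaces are Borel-isomorphic in the required sense; alternatively apply Lemma~\ref{Borel-on-Sigma11} once one checks $\mm$ is Borel-on-$\Sigma^1_1$.

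For (iii), I would use the standard fact that a null $G_\delta$ set in $2^\omega$ can be written as $\bigcap_n G_n$ where $G_n$ is open of measure $<2^{-n}$, and that open sets of bounded measure are coded by sequences of basic clopen sets whose measures sum to a small quantity; decoding such data from $x\in\baire$ gives $U_x=\bigcap_n\bigcup_{k} [s^x_{n,k}]$ with $\mu(\bigcup_k [s^x_{n,k}])\le 2^{-n}$, which is $G_\delta$, lies in $\nn$, and every null set is contained in such a $G_\delta$. The same template handles (iv): a member of $\ee$ is contained in an $F_\sigma$ union $\bigcup_n C_n$ of closed null sets, so decode a sequence of codes for closed null subsets of $2^\omega$ (closedness and measure zero being $\Pi^0_1$-type conditions one can enforce by passing to a continuous surjection onto the appropriate $\Pi^1_1$ or Borel parameter set, as in Lemma~\ref{Borel-on-Sigma11}) and take the union; the resulting $U$ is $F_\sigma$. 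For (v), recall that $A\in\kk_\sigma(\baire)$ iff $A\subseteq\bigcup_n K_n$ with each $K_n$ compact, and a compact subset of $\baire$ is exactly one bounded by some $g\in\baire$ in the sense $K\subseteq\prod_n[0,g(n)]$; so decode from $x$ a countable family $(g_n)$ and set $U_x=\bigcup_n\{y: \forall k\, y(k)\le g_n(k)\}$, an $F_\sigma$ set whose sections form a base of $\kk_\sigma$.

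The genuinely delicate case is (vi), the Laver ideal. Here the complement of a Laver-ideal set is a strongly dominating set, and strong domination is defined via a quantifier over all $\Phi:\omega^{<\omega}\to\omega$; the ideal itself does not have an obvious $G_\delta$ base, so the plan is to produce, for each $x\in\baire$, a Borel (ideally $G_\delta$) set $U_x$ that is \emph{not} strongly dominating, in a way that is cofinal in the Laver ideal. The key structural fact I would use is the characterization of Laver-ideal sets in terms of Laver trees: a set $A$ is in the Laver ideal iff $A$ is disjoint from the branches of some Laver tree $T\subseteq\omega^{<\omega}$ (a tree with a stem below which it ramifies and above which every node has infinitely many immediate successors) — more precisely, $[T]\setminus A$ is again Laver, so the complements of branch sets of Laver trees, suitably closed off, are cofinal. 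I would fix a Polish coding of Laver trees (the set of Laver trees is a Borel, even $\Pi^0_2$-ish, subset of $2^{\omega^{<\omega}}$, or can be continuously parametrized), decode such a tree $T^x$ from $x$, and let $U_x = \baire\setminus [T^x]$, which is open, hence certainly $G_\delta$; the content is then the two verifications that $\baire\setminus[T^x]$ is Laver (i.e. not strongly dominating — one builds the witnessing $\Phi$ from the splitting structure of $T^x$) and that every Laver set is contained in some $\baire\setminus[T^x]$ (given a non-strongly-dominating $A$, the failure of strong domination produces a $\Phi$, and a fusion/tree argument yields a Laver tree $T$ with $[T]\cap A=\emptyset$). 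The main obstacle, and the place I expect the real work to be, is ensuring these sections are simultaneously (a) of complexity exactly $G_\delta$, (b) provably in the Laver ideal via an explicit $\Phi$, and (c) cofinal — balancing these may force $U_x$ to be something more refined than a raw complement of a closed set, e.g. a countable union of such complements intersected appropriately, and getting the complexity bookkeeping right while keeping the parametrization Borel is the crux. If a clean direct parametrization resists, the fallback is to verify that the Laver ideal is Borel-on-$\Sigma^1_1$ and possesses a $G_\delta$ base, and then invoke Lemma~\ref{Borel-on-Sigma11} to get a $G_\delta$ universal set for free.
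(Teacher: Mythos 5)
Parts (i)--(v) of your proposal are essentially sound and close in spirit to the paper's constructions: (i) is the same idea (a countable union of graphs of continuous coordinate functions), (iii) and (iv) match the paper's strategy of parametrizing only ``pre-certified'' small basic sets (the paper enumerates the clopen sets of measure $<2^{-n}$ in advance, which is the clean way to make your decoding total -- you should say this explicitly rather than gesture at it), and your fallbacks via Lemma \ref{Borel-on-Sigma11} for (ii) are legitimate since $\mm$ is Borel-on-Borel and always has an $F_\sigma$ base (closures of nowhere dense sets); the paper instead handles a general Polish $X$ by directly parametrizing dense open sets through an enumeration of a countable base, which avoids your (correctly flagged) worry about Borel isomorphisms not preserving category. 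For (v) the paper is slicker: the relation $\leq^*$ itself, written as $\bigcup_n\bigcap_{m>n}\{(x,y): x(m)\leq y(m)\}$, is already an $F_\sigma$ universal set, with no need to decode countably many bounding functions.

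Part (vi) contains a genuine error. The sections $U_x=\baire\setminus[T^x]$ you propose are \emph{not} in the Laver ideal: the complement of the branch set of a Laver tree is strongly dominating, since for any $\Phi:\omega^{<\omega}\to\omega$ one builds $f$ recursively with $f(n)\geq\Phi(f|_n)$ at every step while steering $f$ off $T^x$ at the very first coordinate (e.g.\ $A=\{f: f(0)=0\}$ is disjoint from the branches of a Laver tree with stem $\langle 1\rangle$ yet is strongly dominating, so ``disjoint from some $[T]$'' does not imply membership in the ideal -- the Laver-tree dichotomy only gives the other direction, for Borel sets). The deeper miss is your claim that ``the ideal does not have an obvious $G_\delta$ base'': it does, immediately from the definition. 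A set $A$ fails to be strongly dominating exactly when there is a single $\Phi$ with
$$A\se D_\Phi=\{f\in\baire: \existsinfty_n\, f(n)<\Phi(f|_n)\}=\bigcap_n\bigcup_{m>n}\{f: f(m)<\Phi(f|_m)\},$$
and each $D_\Phi$ is $G_\delta$ and itself not strongly dominating (the witness is $\Phi$). So $\{D_\Phi:\Phi\in(\omega^{<\omega})^\omega\}$ is a $G_\delta$ base, and identifying $(\omega^{<\omega})^\omega$ with $\baire$ via a bijection $b:\omega\to\omega^{<\omega}$ gives the universal $G_\delta$ set directly; this is exactly what the paper does, and no tree combinatorics or fusion argument is needed.
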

\begin{proof}
Before we proceed with the constructions let us note that both $\sigma$-ideals of measure and category are Borel-on-Borel (see \cite{Kech}, Definition 18.5 and remarks), so by Lemma $\ref{Borel-on-Sigma11}$ both have universal sets of the same Borel complexity as their bases, which happens to be $G_\delta$ and $F_\sigma$ respectively. We will also give combinatorial proofs of these facts. The idea behind the construction in \textbf{(iii)} is based on \cite{PaRe}, we include the detailed proof for the sake of completeness and for the convenience of the reader. We also realize that \textbf{(i)} is a quite simple exercise, but we give the proof as a warm-up.
\vspace{12pt}
\\
\textbf{(i) Countable subsets of $\baire$:} Let $b$ be a bijection between $\omega\times\omega$ and $\omega$ and set a homeomorphism $h: \omega^\omega\rightarrow \omega^{\omega^\omega}$ given by the formula $(h(x)(m))(n)=x(b(m,n))$ for all $x\in \omega^\omega$. Then define $C\se \omega^\omega\times \omega^\omega$ by
$$
(x,y)\in C \IFF (\exists n\in\omega)(h(y)(n)=x).
$$
Since
\begin{align*}
C&=\bigcup_{n\in\omega}\{(x,y): h(y)(n)=x\}=\\
&=\bigcup_{n\in\omega}\bigcap_{m,k\in\omega}\{x: x(m)=k\}\times\{y: y(b(n,m))=k\},
\end{align*}
we see that $C$ is $F_\sigma$. Now, if $\omega^\omega\es A=\{x_n: n\in\omega\}$, then let us take $y\in\omega^\omega$ such that $h(y)(n)=x_n$ for all $n\in\omega$ and check that:
$$
A=C^y,
$$
which means that $C$ is the desired set.
\vspace{12pt}
\\
\textbf{(ii) Meager subsets of a Polish space $X$:} Let $X$ be uncountable and let $\{U_n: n\in\omega\}$ be an enumeration of basic open sets with $U_0=\emptyset$. We begin by constructing a universal open set $U\se\omega^\omega\times X$ for open dense subsets of $X$. Let us define a function $K: \omega \times \omega \rightarrow \omega$ as follows:
\begin{align*}
    K(0,m) &=0,\\
    K(n,0) &=\min\{k: U_k\se U_n\},\\
    K(n,m+1) &=\min\{k: U_k\se U_n\,\, \land\,\, k>K(n,m)\}.
\end{align*}
$K(n,m)$ is a number of the $(m+1)$th basic open set contained in $U_n$ with respect to our enumeration. Let us set
\[
(x,y)\in U \IFF y\in\bigcup_{n\in\omega}U_{K(n,x(n))}.
\]
Since
\begin{align*}
U&=\bigcup_{n,k\in\omega}\{(x,y): y\in U_{K(n,k)},\, x(n)=k\}=\\
&=\bigcup_{n,k\in\omega}\{x\in \omega^\omega: x(n)=k\}\times U_{K(n,k)},
\end{align*}
we see that $U$ is open. Now let $W=\bigcup_{k\in\omega}U_{n_k}\se X$ be some open dense set. Let $x\in \omega^\omega$ be such that $(\forall k\in\omega) \, (K(n_k,x(n_k))=n_k)$ and $x(m)=0$ for $m\neq n_k, k\in\omega$. Then $W=U_x$.

Now let $h: \baire \rightarrow {\omega^{\omega^\omega}}$ be a homeomorphism. Define a set $G$ by
\[
(x,y)\in G \IFF x\in\bigcap_{n\in\omega}U_{h(x)(n)}.
\]
Clearly, $G$ is a $G_\delta$ set. To check, that it is indeed universal, let $H=\bigcap_{n\in\omega}H_n$, where each $H_n$ is open and dense. Let $x\in \omega^{\omega^\omega}$ be such that $(\forall n\in\omega)(U_{x(n)}=H_n)$. Then $H=G_{h^{-1}(x)}$ and eventually $G^c$ is the desired set.

If we restrict our case to $X=2^\omega$ then using a parametrization of basic meager sets from Lemma \ref{Meagery B-J} we may set
\[
2^\omega\times\omega^\omega\times 2^\omega\es V=\{(x,y,z): x\in 2^\omega, y\in\omega^\omega, z\in F_{x, \widetilde{y}}\},
\]
where $\widetilde{y}$ is a partition of $\omega$ such that $I_0=[0, y(0)+1)$ and $I_{n}=[a_{n-1}, a_{n-1}+y(n)+1)$ for $n>0$, where $a_k$ is the right endpoint of $I_k$. Let $f: \omega^\omega\rightarrow 2^\omega\times\omega^\omega$ be a continuous surjection. Then
\[
U=(f, id)^{-1}[V]
\]
is an $F_\sigma$ universal set for $\mm(2^\omega)$.
\vspace{12pt}
\\
\textbf{(iii) Null subsets of $2^\omega$:} Let $\lambda$ be the Haar measure on $2^\omega$ and for $n\in\omega$ fix an enumeration $\{C_{k}^{n}:k\in \omega\}$ of all clopen subsets of $2^{\omega}$ of measure $<2^{-n}$, with additional requirement that $C^n_0=\0$.
\\
Let $h: \omega^\omega\rightarrow \omega^{\omega\times\omega}$ be a homeomorphism. For each $f\in \omega^{\omega}$ the set
\[
G_{f}=\bigcap_{n}\bigcup_{k>n}C_{h(f)(n, k)}^{n}
\]
is a $G_{\delta}$ null set and 
\[
G=\{(f, y)\in \omega^{\omega}\times 2^{\omega}: y\in G_{f}\}
\]
is a $G_{\delta}$ null subset of the plane. We will show that $G$ is universal for null sets.
\\
Let $X$ be a null set. For each $n\in \omega$ there is a sequence of sets $\{V_{k}^{n}: k\in \omega\}$ from the canonical base such that $X\se \bigcup_{k}V_{k}^{n}$, and $\Sigma_{k}\lambda(V_{k}^{n})<2^{-n-1}$. 
\\
Let us enumerate $\{V_{k}^{n}: n,k\in \omega\}$ into a sequence $\{W_{m}: m \in \omega\}$ and set 
$$
\overline{W}_{m}=\bigcup_{n=a_{m}}^{a_{m+1}-1}W_{n},
$$
where $a_{0}=0$ and for $m>0$
$$
a_{m}=\min\{k>a_{m-1}: \Sigma_{n>k}\lambda(W_{n})<2^{-m}\}+1.
$$
Then $\{\overline{W}_{n}:n\in\omega\}$ is a sequence of clopen sets and
\[
	\lambda(\bigcup_{k>n}\overline{W}_k)\leq\Sigma_{k\geq a_{n+1}}\lambda(W_k)<2^{-n-1}.
\]
Furthermore, if we fix $m\in\omega$ and let
\[
N=\max\{n\in\omega: (\exists k\in\omega, l\leq m)(V_k^n=W_l)\},
\]
then
\[
\bigcup_{k\in\omega} V^{N+1}_k\se \bigcup_{k>m}\overline{W}_k,
\]
therefore $X\se \bigcup_{k>m}\overline{W}_k$ for each $m\in\omega$ and $X\se \bigcap_{n\in\omega}\bigcup_{k>n}\overline{W}_k$. See also that $\bigcup_{k>n}\overline{W}_k=\bigcup_{k\geq a_{n+1}}W_k$. Let us fix $n\in\omega$. Since for each $k\geq a_{n+1}$ we have $\lambda(W_k)<2^{-n}$, it follows that $W_k\in\{C^n_l: l\in\omega\}$. Let $g: \omega\times\omega\to\omega$ be such that $W_k=C^n_{g(n, k)}$ for $k\geq a_{n+1}$ and $g(n, k)=0$ for $k<a_{n+1}$. Then
\[
X\se\bigcap_{n\in\omega}\bigcup_{k>n}C^n_{g(n, k)},
\]
so finally let us set $f=h^{-1}(g)$. $X\se G_f$, thus the proof is completed.
\vspace{12pt}
\\
\textbf{(iv) $\mc{E}\se P(2^\omega)$:} We will begin by constructing a universal set for open sets of full measure. For each $n>0$ let $\{x^n_k: 0\leq k< 2^n\}$ be an enumeration of $\{0, 1\}^n$ such that a sequence $(x^n_0, x^n_1, ..., x^n_{2^n-1})$ is in the lexicographical order. Let us enumerate basic clopen sets of $2^\omega$ in the following way:
\begin{align*}
	B_0&=2^\omega,\\
    B_{n,k}&=\{x\in 2^\omega : x|n=x^n_k\},
\end{align*}
$n\in\omega$, $0\leq k< 2^n$. Let us fix an open set $V$ of full measure. Then
$$
\lim_{n\to\infty}\frac{|\{B_{n,k}\se V: 0\leq k<2^n-1\}|}{2^n}=1.
$$
Let us denote $|\{B_{n,k}\se V: 0\leq k<2^n-1\}|$ by $b_n$. It follows that
$$
(\forall m\in\omega)\, (\exists n>m)\, (\frac{b_n}{2^n}\geq 1-2^{-m}),
$$
otherwise $V$ would not be of full measure. The condition $\frac{b_n}{2^n}\geq 1-2^{-m}$ means, that $V$ contains at least $2^n-2^{n-m}$ clopen sets from $\{B_{n,0}, B_{n,1}, ..., B_{n,2^n-1}\}$. Let $\{A^{m,n}_k: 0\leq k<\binom{2^n}{2^n-2^{n-m}}\}$ be an enumeration of subsets of $\{0, ..., 2^n-1\}$ of size of $2^n-2^{n-m}$. Let
$$
B^m_{n,l}=\bigcup\{B_{n,k}: k\in A^{m,n}_l\}.
$$
Let us define $U\se (\baire)^3\times 2^\omega$ by
$$
(x,y)\in U \IFF y\in\bigcup_{n\in\omega}B^{x_0(n)+n}_{x_1(n), x_2(n) mod \binom{2^{x_1(n)}}{2^{x_1(n)}-2^{x_1(n)-x_0(n)-n}}}.
$$
Then $U$ is a universal set for open sets of full measure. Finally, let
$$
(x,y)\in G \IFF y\in \bigcap_{n\in\omega}U_{h(x)(n)}.
$$
Clearly, $G$ is the desired set.
\vspace{12pt}
\\
\textbf{(v) $\kk_{\sigma}\se P(\omega^{\omega})$:} Here we notice that a relation $\leq^{*}\se (\omega^{\omega})^{2}$ clearly is a universal set which by 
$\leq^{*}=\bigcup_{n}\bigcap_{m>n}\{(x,y):x(m)\leq y(m)\}$ is an $F_{\sigma}$ set.
\vspace{12pt}
\\
\textbf{(vi) Laver ideal:} For $\Phi:\omega^{<\omega}\rightarrow\omega$ we see that the set 
\begin{align*}
   D_{\Phi}&=\{f\in\omega^{\omega}:\existsinfty_{n} f(n)<\Phi(f|_{n})\}=\bigcap_{n}\bigcup_{m>n}\{f\in \omega^{\omega}: f(m)<\Phi(f|_{m})\}
\end{align*}
is clearly a $G_{\delta}$ set and such sets form a base for Laver ideal. Let us consider a set
$$
D=\{(\Phi,y)\in (\omega^{<\omega})^{\omega}\times \omega^{\omega}: \existsinfty_{n} f(n)<\Phi(f|_{n})\}.
$$
Now if we fix a bijection $b:\omega\rightarrow\omega^{<\omega}$ and let $\tilde{b}:\omega^{\omega}\rightarrow(\omega^{<\omega})^{\omega}$ be defined as $\tilde{b}(x)=b\circ x$ for $x\in \omega^{\omega}$ then 
\[
G=\{(x,y)\in\omega^{\omega}\times \omega^{\omega}: \existsinfty_{n} f(n)<(b\circ x)(f|_{n})\}=(\tilde{b},id)^{-1}[D]
\]
is the desired set.
\end{proof}
One could observe that if we take a universal set $U$ for null subsets of $2^\omega$, a universal $F_\sigma$ set $V$, and a homeomorphism $h: \baire \rightarrow \baire\times\baire$, then
$$
\baire\times 2^\omega\es W=\{(x,y): y\in U_{h(x)(0)}\cap V_{h(x)(1)}\}
$$
is a universal set for $\ee$, but the complexity of $W$ may be not optimal.
\\
Also note that a universal set for $\kk_{\sigma}$ subsets of  $\omega^{\omega}$ cannot be $\kk_\sigma$ itself. Let $U$ be such a set and let $h:(\omega^{\omega})^{2}\rightarrow \omega^{\omega}$ be a homeomorphism. Then there exists $f\in \omega^{\omega}$ with $h[U]\leq^{*}f$; this is equivalent to $U\leq^{**} h^{-1}[f]=(f_{1},f_{2})$ where $\leq^{**}$ is coordinatewise product of $\leq^{*}$. Then the set $\{g\in \omega^{\omega}:g\leq^{*} f_{2}+1\}$ is different from any vertical section of $U$. 

\section{Additional results and open questions}

The ideals considered in the previous section have a property that eases the task of finding nice universal sets: they all have Borel bases of bounded complexity. In the light of this fact it is natural to pose a problem:
\begin{prob}
Let $\ii$ be an ideal possessing Borel base of unbounded complexity. Does there exist a $\Sigma^1_1$ or $\Pi^1_1$ universal set for $\ii$?
\end{prob}
In \cite{Cich} the authors showed that the Mokobodzki ideal $\{\0\}\otimes\nn$ has a Borel base of unbounded Borel complexity. Ideals with such a property were studied further in \cite{PBN}, where authors  gave some sufficient conditions for the Fubini product of ideals and its generalizations to have a complex base. In this specific case we may ask the following question.
\begin{prob}
When does there exist a universal set for $\ii\otimes\jj$, provided that there are universal sets for $\ii$ and $\jj$?
\end{prob}
As the last result of this paper we will show that there exist universal sets for some Fubini products of ideals, besides obvious examples like $\nn\otimes\nn$ and $\mm\otimes\mm$.
\begin{lem}\label{Fubini Borel-on-Borel}
If ideals $\ii$ and $\jj$ are $Borel$-on-$Borel$ then $\ii\otimes\jj$ is also $Borel$-on-$Borel$.
\end{lem}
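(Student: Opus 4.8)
The plan is to reduce membership of a Borel set in $\ii\otimes\jj$ to the Borel-on-Borel behaviour of $\ii$ and $\jj$ taken separately, via the observation that for a \emph{Borel} set $A\se X\times Y$ one has $A\in\ii\otimes\jj$ if and only if $\{x: A_x\notin\jj\}\in\ii$. First I would verify this equivalence. The implication from right to left is immediate: take the witnessing Borel superset in the definition of $\ii\otimes\jj$ to be $A$ itself. For the converse, if $A\se B'\in\bb(X\times Y)$ witnesses $A\in\ii\otimes\jj$, then $A_x\se B'_x$ for every $x$, hence $\{x: A_x\notin\jj\}\se\{x: B'_x\notin\jj\}\in\ii$, and since $\ii$ is closed under subsets, $\{x: A_x\notin\jj\}\in\ii$.

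Next I would fix a Polish space $Z$ and a Borel set $B\se Z\times(X\times Y)$ and show that $\{z: B_z\in\ii\otimes\jj\}$ is Borel. Viewing $B$ as a Borel subset of $(Z\times X)\times Y$ and using that $\jj$ is Borel-on-Borel, the set $C=\{(z,x)\in Z\times X: B_{(z,x)}\in\jj\}$ is Borel; put $D=(Z\times X)\bez C$, so $D$ is a Borel subset of $Z\times X$ and $(z,x)\in D$ exactly when $(B_z)_x\notin\jj$. Applying the equivalence from the first step to the Borel set $B_z\se X\times Y$ gives $B_z\in\ii\otimes\jj$ iff $\{x: (B_z)_x\notin\jj\}=D_z\in\ii$. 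Since $\ii$ is Borel-on-Borel and $D$ is Borel in $Z\times X$, the set $\{z: D_z\in\ii\}$ is Borel, and this set equals $\{z: B_z\in\ii\otimes\jj\}$. As $Z$ was arbitrary, $\ii\otimes\jj$ is Borel-on-Borel.

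I expect the only genuinely delicate point — and the one worth spelling out — to be the equivalence in the first paragraph, and specifically the use of downward closure: $\jj$ being closed under subsets is what allows the existentially quantified Borel superset in the definition of $\ii\otimes\jj$ to be replaced by the section $A_x$ itself, while $\ii$ being closed under subsets is what turns the inclusion of ``bad section'' sets into the desired membership. The remaining steps are just bookkeeping about which coordinate serves as the parameter; one should only be slightly careful that Borel-on-Borel is invoked in the form ``for every Polish parameter space and every Borel set, the associated section-set is Borel'', so that it may legitimately be used first with parameter space $Z\times X$ and then with parameter space $Z$.
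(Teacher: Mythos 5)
Your proposal is correct and follows essentially the same route as the paper: both arguments apply the Borel-on-Borel property of $\jj$ with the product $Z\times X$ as parameter space to get a Borel set of ``bad'' pairs, and then apply the Borel-on-Borel property of $\ii$ to its sections. The only difference is that you spell out explicitly the equivalence $A\in\ii\otimes\jj\IFF\{x:A_x\notin\jj\}\in\ii$ for Borel $A$ (via downward closure), which the paper uses implicitly; this is a welcome clarification but not a different proof.
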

\begin{proof}
Let $X, Y, Z$ be Polish for which $\ii\se P(Y)$ and $J\se P(Z)$ and let $B\se X\times Y\times Z$ be Borel. Let $\widetilde{B}=\{x\in X: B_x\in \ii\otimes\jj\}$. If $x\in\widetilde{B}$ then 
\[
B_x\in\ii\otimes\jj\equiv \{y\in Y: (B_x)_y\notin\jj \}\in\ii.
\]
On the other hand let us consider a set $\widehat{B}=\{(x, y)\in X\times Y: B_{(x,y)}\notin \jj\}$. Since $\jj$ is $Borel$-on-$Borel$, $\widehat{B}$ is Borel. If $x\in X$ is such that $\widehat{B}_x\in\ii$, then
\[\{(x, y)\in X\times Y: B_{(x, y)}\notin\jj\}_x\in\ii\equiv\{y\in Y: B_{(x, y)}\notin\jj\}\in\ii,\]
which means that $\widetilde{B}=\{x\in X: \widehat{B}_x\in\ii\}$. The set $\{x\in X: \widehat{B}_x\in\ii\}$ is Borel, hence we are done.
\end{proof}
In \cite{Bal} the authors pointed out, referring to Fremlin \cite{Fre}, that for each set $B$ from $\nn\otimes\mm$ there exist a $G_\delta$ null set $G\se X$ and an $F_\sigma$ meager set $F\se X^2$ such that $B\se (G\times X)\cup F$. Symmetrically, for each set $B$ from $\mm\otimes\nn$ there is an $F_\sigma$ meager set $F\se X$ and a $G_\delta$ null set $G\se X^2$ such that $B\se (F\times X)\cup G$. Combining these facts with Lemmas \ref{Borel-on-Sigma11} and \ref{Fubini Borel-on-Borel} we obtain the following result.
\begin{cor}
There are $\Sigma^0_3$ universal sets for $\mm\otimes \nn$ and $\nn\otimes \mm$.
\end{cor}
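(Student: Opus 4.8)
The plan is to deduce both statements from Lemma~\ref{Borel-on-Sigma11} by checking its two hypotheses for each of the $\sigma$-ideals $\nn\otimes\mm$ and $\mm\otimes\nn$: that the ideal is Borel-on-$\Sigma^1_1$, and that it has a Borel base of complexity $\Sigma^0_3$. I will carry the argument out for $\nn\otimes\mm$; the case of $\mm\otimes\nn$ is symmetric, exchanging the roles of $\mm$ and $\nn$ and attaching the factor $X$ to the meager $F_\sigma$ summand.

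The first hypothesis is immediate: $\mm$ and $\nn$ are Borel-on-Borel (as recalled at the beginning of Section~2), so by Lemma~\ref{Fubini Borel-on-Borel} the product $\nn\otimes\mm$ is Borel-on-Borel too, and since every Borel set is $\Sigma^1_1$ it is a fortiori Borel-on-$\Sigma^1_1$. For the second hypothesis I would invoke the Fremlin-type fact quoted above the corollary: every $B\in\nn\otimes\mm$ is contained in a set of the form $(G\times X)\cup F$ with $G\se X$ a $G_\delta$ null set and $F\se X^2$ an $F_\sigma$ meager set. Any such set is $\Sigma^0_3$, since $G\times X$ is $\Pi^0_2\se\Sigma^0_3$, $F$ is $\Sigma^0_2\se\Sigma^0_3$, and $\Sigma^0_3$ is closed under finite unions. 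Collecting those sets of this form that moreover belong to $\nn\otimes\mm$, one obtains a family which consists of $\Sigma^0_3$ members of the ideal and which, by the Fremlin fact, is cofinal in $\nn\otimes\mm$; that is, a $\Sigma^0_3$ base. Since $\nn\otimes\mm$ is also a nontrivial $\sigma$-ideal --- it contains every singleton, and omits $X^2$ because $(X^2)_x=X\notin\mm$ for every $x$ --- Lemma~\ref{Borel-on-Sigma11} now yields a universal set for $\nn\otimes\mm$ of complexity $\Sigma^0_3$, and the symmetric representation $B\se(F\times X)\cup G$ (with $F$ an $F_\sigma$ meager subset of $X$ and $G$ a $G_\delta$ null subset of $X^2$) handles $\mm\otimes\nn$ in the same way.

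The one step that genuinely needs care, and which I expect to be the main obstacle, is the cofinality invoked above: one must know that the covering set $(G\times X)\cup F$ produced by the Fremlin fact can be taken inside $\nn\otimes\mm$. This is not automatic for an arbitrary $F_\sigma$ meager $F$, because by Kuratowski--Ulam the non-meager vertical sections of $F$ form only a meager, not necessarily null, set, so $\{x:F_x\notin\mm\}$ need not lie in $\nn$; for instance $N\times X$ with $N\se X$ an $F_\sigma$ meager set that is not null is of the above form but does not belong to $\nn\otimes\mm$. The remedy is to start from a Borel set $B^{*}\supseteq B$ witnessing $B\in\nn\otimes\mm$: then $\{x:B^{*}_x\notin\mm\}$ is a Borel null set, and after enlarging $G$ to a $G_\delta$ null set that contains it, the remainder $B^{*}\setminus(G\times X)$ has all vertical sections meager, so it admits an $F_\sigma$ meager superset $F$ with $\{x:F_x\notin\mm\}\in\nn$, whence $(G\times X)\cup F\in\nn\otimes\mm$. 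This last selection is precisely the content of the sharp form of Fremlin's theorem that is being cited; everything else --- the $\Sigma^0_3$ bookkeeping, nontriviality, and the mechanical combination of Lemmas~\ref{Fubini Borel-on-Borel} and~\ref{Borel-on-Sigma11} --- is routine.
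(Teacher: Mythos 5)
Your proposal is correct and follows essentially the same route as the paper, which likewise combines the Fremlin-type decomposition $B\se(G\times X)\cup F$ with Lemmas~\ref{Fubini Borel-on-Borel} and~\ref{Borel-on-Sigma11} to get a $\Sigma^0_3$ universal set. The only difference is that you explicitly address the point, left implicit in the paper, that the covering sets $(G\times X)\cup F$ must themselves be chosen inside the Fubini product for them to form a base --- a worthwhile clarification, but not a change of method.
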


\subsection*{Acknowledgements}
This research was partially supported by the grant S50129/K1102 (0401/0086/16), Faculty of Fundamental Problems of Technology, Wroc{\l}aw University o Science and Technology.

\end{document}